\newtheorem{theorem}{Theorem}
\newtheorem{lemma}[theorem]{Lemma}
\newtheorem{corollary}[theorem]{Corollary}
\newtheorem{prop}[theorem]{Proposition}
\newtheorem*{theorem*}{Theorem}{\bf}{\it}
\newtheorem*{proposition*}{Proposition}{\bf}{\it}
\newtheorem*{observation*}{Observation}{\bf}{\it}
\newtheorem*{lemma*}{Lemma}{\bf}{\it}
\theoremstyle{definition}
\theoremstyle{remark}
\newtheorem{remark}[theorem]{Remark}
\newcommand{\supp}{\text{supp}\;}
\renewcommand{\epsilon}{\varepsilon}
\renewcommand{\phi}{\varphi}
\newcommand{\Z}{\mathbb Z}
\newcommand{\R}{\mathbb R}
\renewcommand{\tilde}{\widetilde}
\definecolor{lev}{rgb}{0.773,0.294,0.549}
\def\XXint#1#2#3{{\setbox0=\hbox{$#1{#2#3}{\int}$ }
\vcenter{\hbox{$#2#3$ }}\kern-.6\wd0}}
\begin{document}
\title[]{ Eigenfunctions with infinitely many isolated critical points}

\author{Lev Buhovsky, Alexander Logunov, Mikhail Sodin}

\begin{abstract}
We construct a Riemannian metric on the $ 2 $-dimensional torus, such that for infinitely many eigenvalues of the Laplace-Beltrami operator, a corresponding eigenfunction has infinitely many isolated critical points.
A minor modification of our construction implies
that each of these eigenfunctions
has a level set with infinitely many connected components
(i.e., a linear combination of two eigenfunctions may have infinitely many nodal domains).
\end{abstract}
\maketitle

\section{Introduction and the main result}

Let $ (X, g) $ be a compact connected Riemannian manifold without boundary.
The Riemannian structure $ g $ on $ X $ defines the Laplace-Beltrami
operator $ \Delta_g $ on the space of smooth functions on $ X$. The operator $ \Delta_g $ admits a spectral decomposition via orthonormal basis of
$ L^2(X,\Omega_g) $ (where $ \Omega_g $ is the volume density on $ X $ induced by $ g $), consisting of smooth real-valued functions, and with corresponding real and non-negative eigenvalues: $ \Delta_g \phi_j + \lambda_j \phi_j = 0 $, where $ \lambda_0 = 0 < \lambda_1 \leqslant \lambda_2 \leqslant \ldots $, and $ \phi_0 \equiv 1 $.

The number of connected components of the set $ Crit(\phi_k) $ of the critical points of $ \phi_k $ is an important geometric characteristic
of the eigenfunction $\phi_k$. We denote it by $ N_{crit}(k) $. In \cite{Y}, Yau asked whether there exists a non-trivial asymptotic lower bound on $ N_{crit}(k) $. The following theorem proven by Jakobson and Nadirashvili in~\cite{J-N} shows the negative answer to this question:

\begin{theorem*}[Jakobson-Nadirashvili]
There exists a Riemannian metric on $ \mathbb{T}^2 $ and a sequence of eigenfunctions of the corresponding Laplace-Beltrami operator, such that the corresponding eigenvalues converge to infinity, but the number of
critical points of the eigenfunctions from the sequence remains bounded.
\end{theorem*}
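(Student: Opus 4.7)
The plan is to build $g$ as a warped product on $\mathbb{T}^2$, that is, $g = dx^2 + h(x)^2\, dy^2$ for a smooth positive $1$-periodic function $h$. Such a metric admits the $S^1$-action of translation in $y$, so the eigenvalue problem separates into Fourier modes: every eigenfunction decomposes into terms of the form $u_{m,j}(x)\cos(2\pi m y)$ and $u_{m,j}(x)\sin(2\pi m y)$ (plus axisymmetric terms when $m = 0$), where $u_{m,j}$ is the $j$-th eigenfunction of a reduced one-dimensional Sturm--Liouville operator $\mathcal{L}_m$, with eigenvalue $\lambda_{m,j}(h)$. A single separated term already has product-type critical structure of size $\sim jm$, so a separated summand alone cannot give bounded $N_{crit}$.

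The strategy is to exploit multiplicity of the two-dimensional spectrum. As $h$ varies in a one-parameter family, the maps $h \mapsto \lambda_{m,j}(h)$ are continuous, so an intermediate-value argument produces tunings of $h$ at which an axisymmetric eigenvalue $\lambda_{0,j}$ coincides with a sectoral ground-state eigenvalue $\lambda_{m,0}$. At such a tuning the full eigenspace contains both the axisymmetric $u_{0,j}(x)$ and the pair $u_{m,0}(x)\cos(2\pi m y)$, $u_{m,0}(x)\sin(2\pi m y)$, with $u_{m,0} > 0$. For combinations $\varphi = \alpha\, u_{0,j}(x) + \beta\, u_{m,0}(x)\cos(2\pi m y)$ the positivity of $u_{m,0}$ forces $\varphi_y = 0$ to reduce to $\sin(2\pi m y) = 0$; critical points of $\varphi$ therefore sit on finitely many horizontal circles $y = k/(2m)$, and on each such circle the equation $\varphi_x = 0$ becomes one of two one-dimensional equations $\alpha\, u_{0,j}'(x) \pm \beta\, u_{m,0}'(x) = 0$. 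If $\alpha,\beta$ can be chosen so that each branch has only a uniformly bounded number of zeros in $x$, then $\varphi$ has only a uniformly bounded number of isolated critical points on $\mathbb{T}^2$.

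The hardest part will be controlling the zero counts of the branches $\alpha u_{0,j}' \pm \beta u_{m,0}'$ uniformly as $m,j \to \infty$ (both indices must grow, since the coincidence $\lambda_{0,j} = \lambda_{m,0}$ forces the 1D eigenvalues to grow together). Sturm oscillation gives $\sim j$ zeros for $u_{0,j}'$, so one must tune $h$ so that the two 1D eigenfunctions have matched WKB behaviour: concentrate all oscillations of $u_{0,j}$ inside a narrow window where $u_{m,0}'$ is arranged to be large and monotonic, so that $|\beta u_{m,0}'| > |\alpha u_{0,j}'|$ outside the window and each branch has only $O(1)$ sign changes overall. Iterating the construction along a sequence of such coincidences, engineered by successively finer choices of $h$ in an inductive Colin de Verdi\`ere--type assembly of the warping profile, then yields infinitely many eigenvalues $\lambda_n \to \infty$ each carrying an eigenfunction with uniformly bounded $N_{crit}$.
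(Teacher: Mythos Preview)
First, note that the paper does not actually prove the Jakobson--Nadirashvili theorem; it is quoted from~\cite{J-N} as background for the authors' own result. So there is no proof in the paper to compare your proposal against.

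That said, your plan contains a genuine counting error that breaks the approach. You correctly observe that for
\[
\varphi=\alpha\,u_{0,j}(x)+\beta\,u_{m,0}(x)\cos(2\pi m y)
\]
with $u_{m,0}>0$, the equation $\varphi_y=0$ forces $\sin(2\pi m y)=0$. But this yields $2m$ horizontal circles $y=k/(2m)$, $k=0,1,\dots,2m-1$, not a number independent of $m$. On the $m$ circles with $k$ even the reduced equation is $\alpha u_{0,j}'+\beta u_{m,0}'=0$; on the $m$ circles with $k$ odd it is $\alpha u_{0,j}'-\beta u_{m,0}'=0$. Hence if the two one-dimensional branches have $N_+$ and $N_-$ zeros in $x$, the total number of (generically isolated) critical points of $\varphi$ is $m(N_++N_-)$, not $N_++N_-$. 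Since you yourself note that the coincidence $\lambda_{0,j}=\lambda_{m,0}$ forces $m\to\infty$, this count is unbounded no matter how well you control $N_\pm$ via WKB matching.

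Put differently, combining an axisymmetric mode with a high-frequency sectoral ground state cannot beat the trivial lower bound coming from the $2m$ critical circles of $\cos(2\pi m y)$: the axisymmetric term does not see $y$ at all, so it cannot merge or cancel those circles. Any repair would have to keep the $y$-frequency bounded (which then prevents the eigenvalue from growing through the sectoral \emph{ground} state, forcing you into higher sectoral states that are no longer positive and destroy the clean reduction of $\varphi_y=0$), or else use a genuinely two-dimensional mechanism rather than an axisymmetric-plus-sectoral superposition.
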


Our result states that generally one cannot hope for an asymptotic upper bound on $ N_{crit}(k) $:

\begin{theorem} \label{t:main}
On $ \mathbb{T}^2 $ there exists a Riemannian metric and a sequence of eigenfunctions of the corresponding Laplace-Beltrami operator, with eigenvalues converging to infinity, such that each one of the eigenfunctions from the sequence has an infinite number of isolated critical points.
\end{theorem}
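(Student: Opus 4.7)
The plan is to construct $g$ as a $C^\infty$-limit of small conformal perturbations of the flat metric $g_0$ on $\mathbb{T}^2 = \mathbb{R}^2/\mathbb{Z}^2$, each perturbation supported in one of a nested family of disjoint annuli $A_n$ around a fixed point $p_\ast\in\mathbb{T}^2$, with outer radii $r_n\downarrow 0$. Two background facts power the construction. First, the number of representations of an integer as a sum of two squares, $r_2(N)$, is unbounded along a sparse sequence $N_1<N_2<\cdots$; the corresponding flat eigenvalues $4\pi^2 N_k$ have eigenspaces $E_{N_k}$ of dimension $r_2(N_k)$, giving large freedom to prescribe local behavior of trigonometric eigenfunctions. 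Second, non-degenerate (Morse) critical points are stable under small $C^2$-perturbations, so critical points engineered into a quasimode persist in a nearby genuine eigenfunction of a nearby metric.

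At stage $n$, I fix the annulus $A_n\subset B_{r_n}(p_\ast)\setminus B_{r_{n+1}}(p_\ast)$ and a flat eigenlevel $N_{k(n)}$ with $r_2(N_{k(n)})\gg n$. Using this abundance I pick a linear combination $\psi_n\in E_{N_{k(n)}}$ of plane waves $\cos(2\pi\langle v,\cdot\rangle)$ with $|v|^2=N_{k(n)}$ that has at least $n$ non-degenerate critical points inside $A_n$. Then, by a Lyapunov--Schmidt / first-order spectral perturbation argument, I choose a small conformal factor $e^{2\rho_n}$ with $\operatorname{supp}\rho_n\subset A_n$ so that the perturbed Laplacian has an eigenvalue $\mu_n$ near $4\pi^2 N_{k(n)}$ whose eigenfunction $\phi_n$ is $C^2$-close on $A_n$ to $\psi_n$. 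By Morse stability, $\phi_n$ inherits at least $n$ isolated critical points in $A_n$.

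Setting $\rho=\sum_n\rho_n$ and $g=e^{2\rho}g_0$, a sufficiently fast decay of $r_n$ and $\|\rho_n\|_{C^m}$ ensures that $g$ is a genuine smooth metric on $\mathbb{T}^2$: it equals $g_0$ outside $\bigcup A_n$ and extends smoothly across $p_\ast$ because the bumps are supported in $A_n\not\ni p_\ast$ with norms tending to zero in every $C^m$. For fixed $n$, the perturbations at all later stages $m>n$ are so tiny that $\phi_n$ only gets moved by a negligible amount in $C^2(A_n)$, preserving its critical points there; standard spectral stability then yields an eigenfunction $\widetilde\phi_n$ of $\Delta_g$, with eigenvalue close to $\mu_n$, having at least $n$ isolated critical points in $A_n$.

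\textbf{Main obstacle.} The hard part is to upgrade the count ``at least $n$'' to ``infinitely many'' for each $\widetilde\phi_n$ individually. For this I need to couple the construction across scales: the stage-$m$ perturbation $\rho_m$ must be designed so that it creates a fresh Morse critical point inside $A_m$ not only for the stage-$m$ eigenfunction $\phi_m$, but also for every $\widetilde\phi_n$ with $n<m$. This is plausible because at the semiclassical scale $1/\sqrt{\mu_n}\ll r_m$, each $\widetilde\phi_n$ looks locally like a plane wave on $A_m$, and a carefully tuned tiny conformal bump can bend its gradient to zero at a prescribed point, creating a non-degenerate critical point. The technical backbone is quantitative resolvent estimates controlling eigenfunction perturbations in $C^2$ against metric perturbations in $C^m$, combined with a diagonal choice of scales $r_n$, eigenlevels $N_{k(n)}$, and amplitudes $\|\rho_n\|_{C^m}$ so that every one of infinitely many ``stage-$m$ critical points'' survives for every earlier $\widetilde\phi_n$. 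Granted this, each $\widetilde\phi_n$ acquires critical points from its own stage and from every later stage, yielding infinitely many isolated critical points accumulating at $p_\ast$. The remaining steps---elliptic regularity, Morse stability, and convergence of the metric series---are routine.
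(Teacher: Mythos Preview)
Your proposal correctly isolates the crux---upgrading ``at least $n$'' to ``infinitely many'' critical points for each fixed $\tilde\phi_n$---but the mechanism you propose for this step cannot work. For $g=e^{2\rho}g_0$ to be smooth at $p_*$ you need $\|\rho_m\|_{C^k}\to 0$ for every $k$; by the very resolvent estimates you invoke, each $\tilde\phi_n$ is then $C^2$-close (indeed $C^\infty$-close) to the trigonometric polynomial $\psi_n$. If $\psi_n$ is Morse---as you require for stability of its stage-$n$ critical points---it has only finitely many critical points, and Morse stability cuts both ways: a $C^2$-small perturbation can neither destroy \emph{nor create} critical points. Concretely, to ``bend the gradient to zero'' at a point of $A_m$ you must move $\nabla\tilde\phi_n$ by at least $\inf_{A_m}|\nabla\psi_n|$, which is bounded below independently of $m$ once $A_m$ misses the finitely many critical points of $\psi_n$; but the stage-$m$ bump moves the eigenfunction only by $o(1)$ in $C^1$. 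The only escape would be to force $\nabla\psi_n$ to vanish to infinite order at $p_*$, which a nonzero trigonometric polynomial cannot do. So the infinite accumulation of critical points has to be built into the local model from the start, not manufactured by generic perturbation of flat eigenfunctions.

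The paper does exactly this, by a different and much more explicit route. It takes a Liouville metric $Q(x)(dx^2+dy^2)$, reducing to the one-dimensional equation $F''+(\lambda Q-m^2)F=0$. The main observation is elementary: if $K=\lambda Q-m^2$ vanishes at a point $x_*$ and makes infinitely many sign changes of rapidly decaying amplitude near $x_*$, then any solution with $F'(x_*)=0$ has infinitely many isolated critical points near $x_*$, since $F'(x_i)=-\int_{x_*}^{x_i}KF$ inherits the alternating signs of $K$. Thus a \emph{single} oscillatory cascade built into $Q$ near each of a sequence of points $x_*^{(i)}$ does all the work; no tower of successive perturbations is needed. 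The remaining issue is to arrange, for infinitely many integers $m_i$, the matching condition $\Lambda_{Q,m_i}\,Q(x_*^{(i)})=m_i^2$ (so that $K_i$ actually vanishes at $x_*^{(i)}$); this is handled by applying Brouwer's fixed-point theorem to the cascade locations and then passing to a diagonal limit.
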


Similarly to the construction of Jakobson and Nadirashvili, we also will  construct a  Riemannian metric on $ \mathbb{T}^2 $
of the Liouville type. The ``punch-line argument'' in our proof of Theorem \ref{t:main} invokes the Brower's fixed point theorem.

Let us mention that Enciso and Peralta-Salas \cite{E-PS} constructed a smooth metric such that the first eigenfunction has arbitrarily
large number of isolated critical points. Results of opposite spirit were proven
by Polterovich and Sodin \cite{Po-S} and
Polterovich-Polterovich-Stojisavljevi\'c~\cite{PPS}.

It is worth to point out that, at present, we do not know what happens in the case when the Riemannian metric $g$ is real-analytic. In that case,
an eigenfunction must have a finite number of isolated critical points, however,
we do not know whether there exists an asymptotic
upper bound for the number of critical points in terms of the corresponding eigenvalue.

At last, we mention that minor modification of our construction implies
that each of the eigenfunctions in the statement of Theorem~\ref{t:main}
has a level set with infinitely many connected components. This feature is
interesting in view of the failure of the Courant nodal domains theorem for linear combinations of eigenfunctions, see 
Gladwell-Zhou~\cite{Gladwell-Zhou}, Arnold~\cite{Arn}, B\'erard-Helffer~\cite{Berard-Helffer},  B\'erard-Charron-Helffer~\cite{Berard-Charron-Helffer} and references therein.
We will outline the needed modification at the end of this note.

\section{Proof of Theorem \ref{t:main}}

\subsection{The metric and $ 1 $-dim reduction}

Consider the coordinates $ (x,y) \in \mathbb{T}^2 = (\R / 2\pi \Z)^2 $ on the torus. Our Riemannian metric $ g $ on $ \mathbb{T}^2 $ will be of the form $ ds^2 = Q(x) (dx^2 + dy^2) $, where $Q$ is a $C^\infty$ smooth, positive, $2\pi$ periodic function.
Then the Laplace-Beltrami operator is
$ \Delta_g = \frac{1}{Q(x)} \left( \frac{\partial^2}{\partial x^2} + \frac{\partial^2}{\partial y^2} \right) $.
Searching eigenfunctions $ \phi $ of the form $ \phi(x,y) = F(x)G(y) $ with $2\pi$-periodic functions $F$ and $G$,
the equation $ \Delta_g \phi + \lambda \phi = 0 $ becomes equivalent to the system
\begin{equation} \label{eq:split}
\begin{cases}
   F''(x) + (\lambda Q(x) - \mu) F(x) = 0 \\
   G''(y) + \mu G(y) = 0 \\
 \end{cases}
\end{equation}

\noindent for some $ \mu \in \R $. Thus we get a sequence of solutions $ G_m(y) = e^{imy} $ with $ \mu_m = m^2 $, where $ m \in \Z $, and for each such $ m $, the first equation in the system $(\ref{eq:split})$ becomes
\begin{equation} \label{eq:maineq}
   F''(x) + (\lambda Q(x) - m^2) F(x) = 0.
\end{equation}

For each given $ m $, the eigenvalue problem $(\ref{eq:maineq})$ admits
a spectral decomposition,
where we have a sequence of smooth solutions $ F = F_{m,1}, F_{m,2}, \ldots $
with corresponding eigenvalues $ \lambda_{m,1} \leqslant \lambda_{m,2} \leqslant \ldots \, $.
Going back to our original eigenvalue problem on the torus,
$ \Delta_g \phi + \lambda \phi = 0 $,
we have found a sequence of solutions $ F_{m,k}(x) \cos my $, $ F_{m,k}(x) \sin my $. It is easy to see that this sequence of solutions is complete in $ L^2(\mathbb T^2, Q{\rm d}x{\rm d}y) $, and hence gives a complete spectral decomposition of our original eigenvalue
problem on the torus we will not use this fact).

Our goal now is to find a smooth positive function $ Q \in C^\infty (\R / 2\pi \Z) $,
such that {\em  for infinitely many integers $ m $, there will exist a solution $ F_m$ of the equation $(\ref{eq:maineq})$
having infinitely many isolated critical points in $ \R / 2\pi \Z $}. For such $ Q $ and $ F_m $, the eigenfunction $ F_m(x) \cos my $ (as well as the
eigenfunction $ F_m(x) \sin my $) will have infinitely many isolated critical points.

We have reduced $2$-dimensional eigenvalue problem to a $ 1 $-dimensional problem, and for convenience, we will consider periodic functions on $ \R $, instead of functions on $ \R / 2\pi \Z $. Moreover, by rescaling, we may assume that the functions are $ 8 $-periodic rather than $ 2\pi $-periodic.

\subsection{The main observation} \label{subs:main-observation}

Assume that $ u \colon \R \rightarrow \R $ is a non-trivial solution of the differential equation $ u''(x) + K(x) u(x) = 0 $, where $ K\colon \R \rightarrow \R $ is some smooth function. Also assume that for some point $ x_* \in \R $, we have $ K(x_*) = 0 $, and moreover, $ K $ makes a large number of oscillations near $ x_* $, such that these oscillations decay very fast when we approach $ x_* $. To be more precise, we suppose that for some $$ x_* = x_N < x_{N-1} < \ldots < x_1 < x_* + \epsilon, $$ we have $ (-1)^i K|_{(x_{i+1},x_i)} < 0 $, and moreover, for every $ 1 \leqslant i \leqslant N-2 $,
$$
\left| \int_{x_{i+1}}^{x_i} K(x) \, dx \right|  \ {\rm is\ much\ larger\ than\ } \
\sum_{j=i+1}^{N-1} \left| \int_{x_{j+1}}^{x_j} K(x) \, dx \right|.
$$
Furthermore, assume that $ u'(x_*) = 0 $, that $ u $ is positive on the interval $ [x_*,x_*+\epsilon] $, and
the values of $ u $ on that interval are of the same order of magnitude.
We claim that under these assumptions, $ u $ has at least $N-2$ critical points on $ (x_*,x_*+\epsilon) $.

Indeed, for any $ 1 \leqslant i < N $, we have $$ u'(x_i) - u'(x_{i+1}) = \int_{x_{i+1}}^{x_{i}} u''(x) \, dx = - \int_{x_{i+1}}^{x_i} K(x) u(x) \, dx ,$$ and moreover we have $ u'(x_N) = u'(x_*) = 0 $. Hence, by our assumptions, for each $ 1 \leqslant i < N $, the sign of $ u'(x_i) = (u'(x_i) - u'(x_{i+1})) + \ldots + (u'(x_{N-1}) - u'(x_N)) $ is positive if $ i $ is  even, and is negative when $ i $ is odd. Therefore for each $ 1 \leqslant i < N $, $ u $ has a critical point
$ \xi_i \in (x_{i+1},x_{i}) $, which is moreover isolated since
$ u''(\xi_i) = - K(\xi_i)u(\xi_i) \neq 0 $. We conclude that $ u $ has at least $ N-2 $ isolated critical points on the interval $ (x_*,x_* + \epsilon) $.

We can also consider a more general setting where the function $ K $ has infinitely many rapidly decaying oscillations near some point
$ x_* $, and in this case we may conclude that $ u $ has infinitely many isolated critical points. More precisely, it is enough to assume the following:

\begin{enumerate}
 \item
 We have some $ x_* \in \R $ and a strictly decreasing sequence $ (x_i)_{i=1}^\infty $, such that $ \lim_{i \rightarrow \infty} x_i = x_* $.
 \item We have a smooth function $ K\colon \R \rightarrow \R $ such that
 $ (-1)^i K|_{(x_{i+1},x_{i})} < 0 $ for every $ i $, and
 \[
 \left| \int_{x_{i+1}}^{x_i} K(x) \, dx \right|  \ge C\,
 \sum_{j=i+1}^{\infty} \left| \int_{x_{j+1}}^{x_j} K(x) \, dx \right|
 \]
  with some numerical constant $C>1$.
 \item $ u : \R \rightarrow \R $ is a solution of $ u''(x) + K(x) u(x) = 0 $ with $ u(x_*) > 0 $ and $ u'(x_*) = 0 $.
 \end{enumerate}
\noindent Then $ u $ has infinitely many isolated critical points on a small interval $ (x_*, x_* + \delta) $.

We will apply this idea to the sequence of periodic functions
$K_i(x)=\lambda_i Q(x) - m_i^2$ and the corresponding periodic solutions
$u_i=F_{m_i}$ of the ODE~\eqref{eq:maineq}.

\subsection{The construction} \label{subs:construction}

Here we will construct a family of smooth periodic functions $q_{S, \tau}(x)$
having infinitely many ``cascades'' each consisting of an infinite number of rapidly
decaying oscillations.

\medskip\noindent {\bf a)}
Let $ \psi : \mathbb{R} \rightarrow [0,1] $ be a smooth even function such that $ \supp(\psi)\subset (-1,-1/4) \cup (1/4,1) $
and such that $ \psi = 1 $ on $ [-3/4,-1/2] \cup [1/2,3/4] $. For every $ t \in [0,\infty) $,
denote $ \psi_t (x) = \psi(4^t x) $ and then define the $2$-periodic function $ \phi_t(x) = 1 - \sum_{n \in \mathbb{Z}} \psi_t(x+2n) $.

\begin{figure}[h!]
\centering
\includegraphics[scale=0.8]{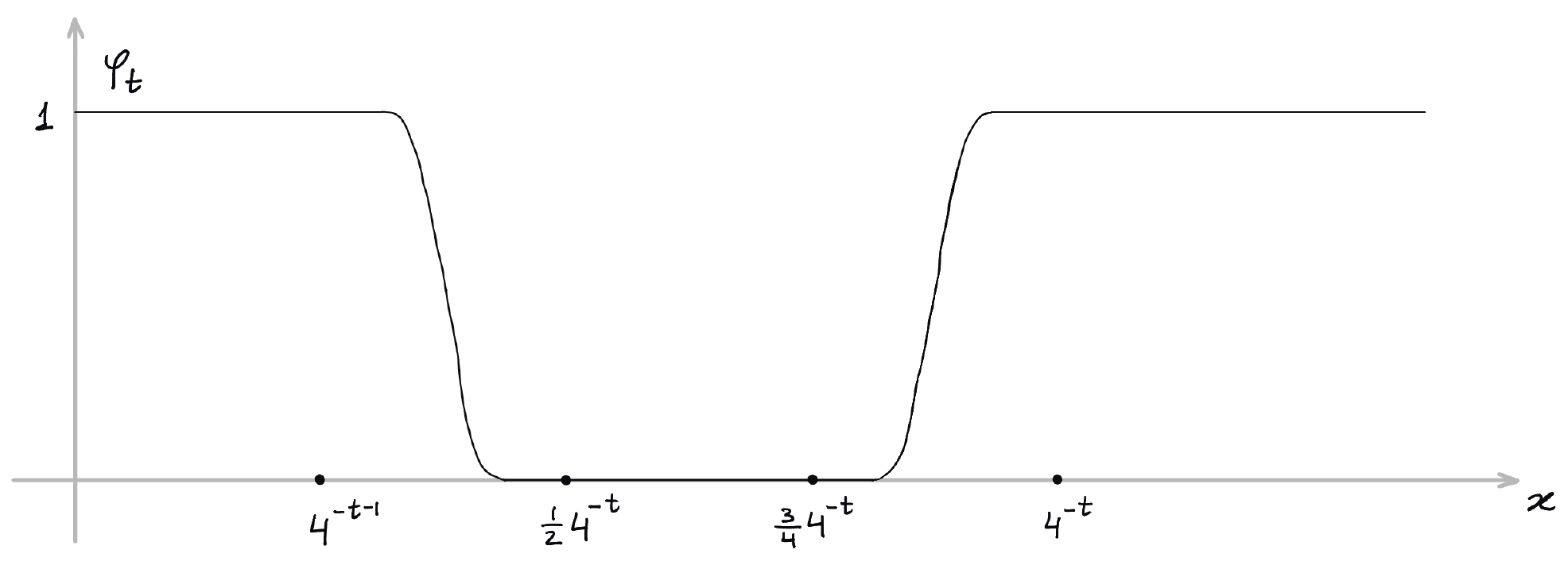}
\caption{\label{fig:graph-q-phi-t}}
\end{figure}

\medskip\noindent {\bf b)} Choose a smooth positive function $ q\colon \R \rightarrow (0, \infty) $ satisfying:

\begin{itemize}
\item $ q(x) = q(-x) $, $x\in\R$, \smallskip
\item $ q(x+2) = q(x)$,  $x \in \R $,\smallskip
\item $ q $ is increasing on $ [-1,0] $ and decreasing on $ [0,1] $.
\end{itemize}
For  $ t \in [0,\infty) $,  denote $q_t(x) = \phi_t(x) (q(x) - q(4^{-t})) + q(4^{-t}) $.
We may assume that the function $ q $ is ``flat''  enough at $ x=0 $ so that $ q_t \rightarrow q $ in the $ C^\infty $ topology, when $ t \rightarrow \infty $.
\begin{figure}[h!]
\centering
\includegraphics[scale=0.8]{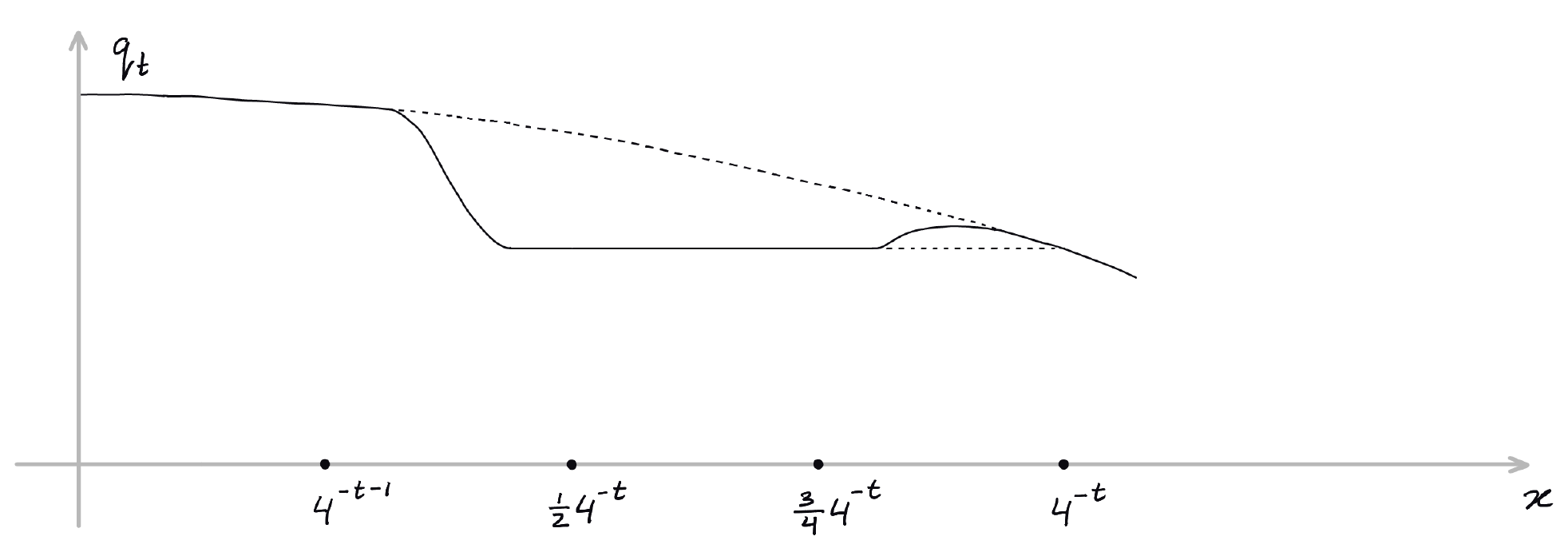}
\caption{\label{fig:graph-q-t}}
\end{figure}
This is possible since $q_t-q=(\phi_t-1)(q-q(4^{-t}))$. The first factor vanishes outside of
the interval $(4^{-t-1}, 4^{-t})$. Given a function $\phi_t$, we choose the function $q$ so flat at the origin that the smallness of $q-q(4^{-t})$ and its derivatives will
compensate the size of $\phi_t$ and its derivatives on the interval $[4^{-t-1}, 4^{-t}]$.

\medskip\noindent {\bf c)}
Fix a smooth even function $ h\colon \R \rightarrow \R $ with $ \supp h \subset (-3/4,-1/2) \cup (1/2,3/4) $, such that for some $ x_\infty \in (1/2,3/4)$ and for a strictly decreasing sequence $ (x_i)_{i=1}^\infty $ , $ x_i \in (x_\infty,3/4) $,
with $ \lim_{i \rightarrow \infty} x_i = x_\infty $, we have $ (-1)^i h|_{(x_{i+1},x_{i})} < 0 $
for every $ i $, and that moreover, for every $ i $ we have
\[
 \left| \int_{x_{i+1}}^{x_i} h(x) \, dx \right|  \ge C\,
 \sum_{j=i+1}^{\infty} \left| \int_{x_{j+1}}^{x_j} h(x) \, dx \right|
 \]
 with some numerical constant $C>1$.
For each $ t \in [0,\infty) $, denote $ h_t(x) = h(4^tx) $, and then $ H_t(x) = \sum_{n \in \Z} h_t(x+2n) $.

\begin{figure}[h!]
\centering
\includegraphics[scale=0.8]{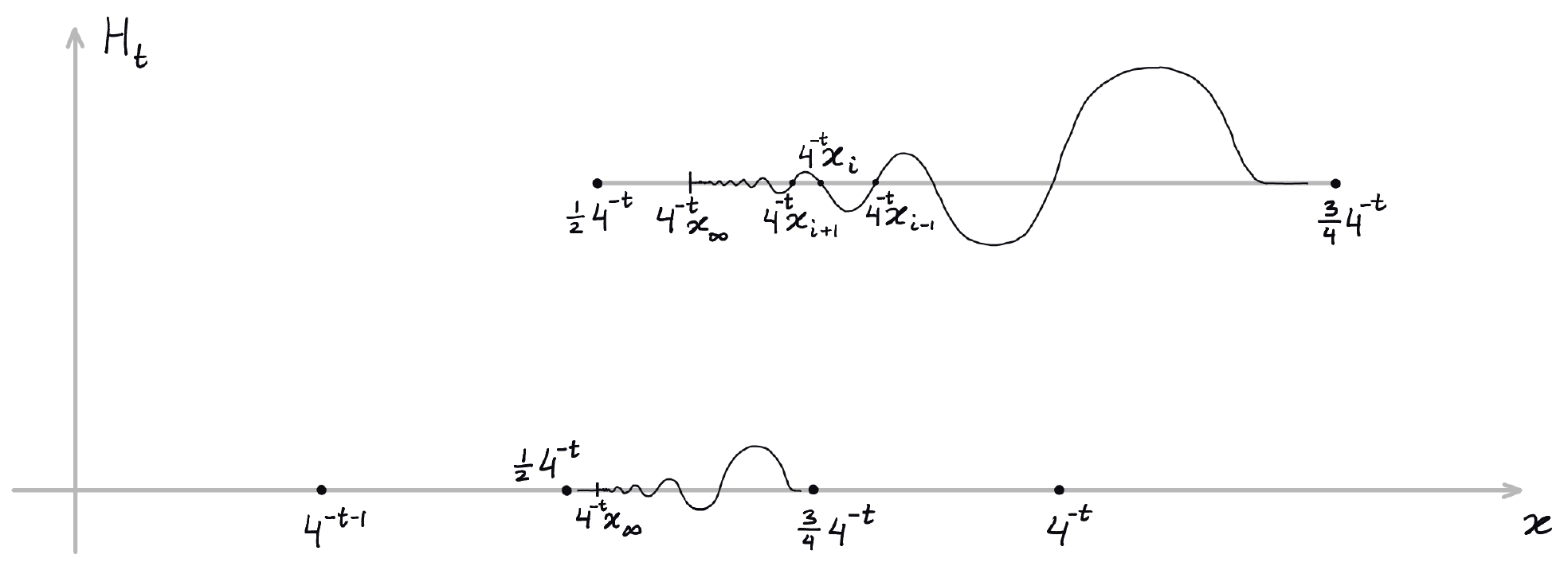}
\caption{\label{fig:graph-H-t}}
\end{figure}

\medskip\noindent {\bf d)}
For any finite $1$-separated set $ S \subset [0,\infty) $  ($1$-separation means that $ |s-t| \geqslant 1 $ for every $s\ne t \in S $),
and for any function $ \tau\colon S \rightarrow \R $, define the function
$ q_{S,\tau}\colon \R \rightarrow \R $ as follows. We set
$ q_{\varnothing, \tau} = q $, and if $ S \subset  [0,\infty) $ is a non-empty finite set, then we choose some $ t \in S $, denote $ S' = S \setminus \{ t \} $ and $ \tau' = \tau_{|S'} $, and define $ q_{S,\tau}(x) = \phi_t(x) (q_{S',\tau'}(x) - q_{S',\tau'}(4^{-t})) + q_{S',\tau'}(4^{-t}) + \tau(t) H_t(x)  $.
\begin{figure}[h!]
\centering
\includegraphics[scale=0.8]{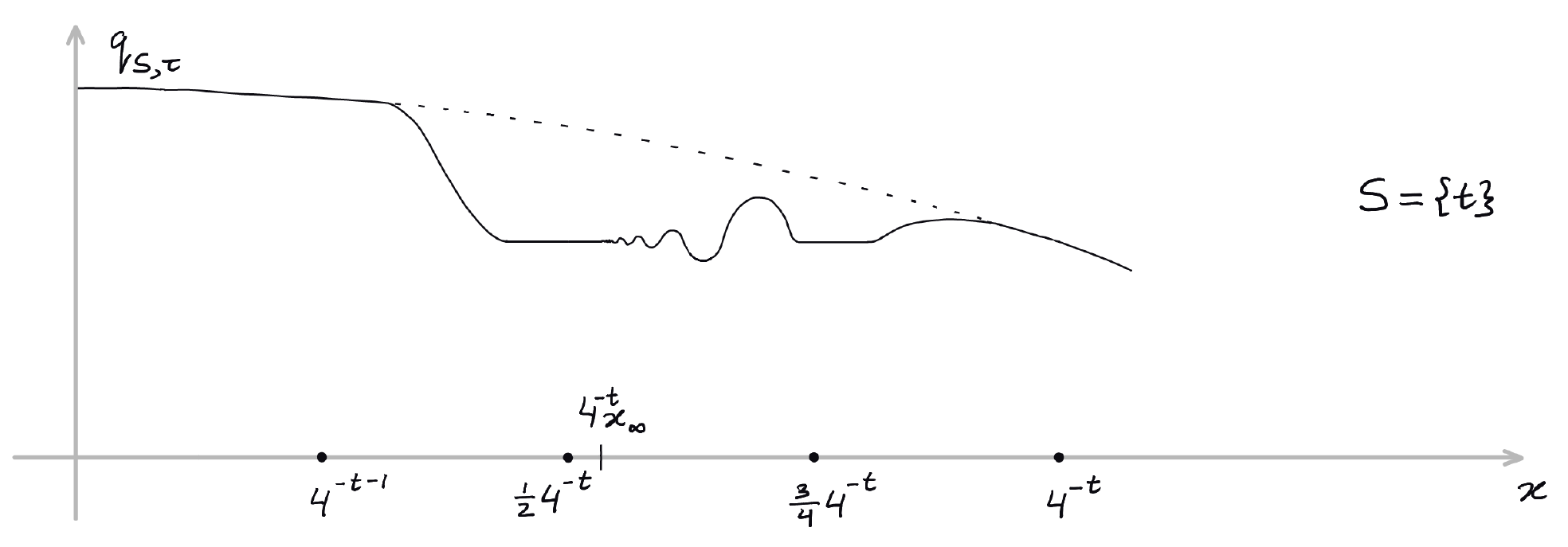}
\caption{\label{fig:graph-q-s-tau}}
\end{figure}
For each $t\in S$, this creates a cascade of oscillations around the point $4^{-t}x_\infty$ of intensity $\tau (t)$. Note that
$ \bigl\{x\colon q_{S, \tau}(x) \ne q(x) \bigr\}
\subset \bigcup_{t\in S} (4^{-t-1}, 4^{-t}) $ (the intervals on the RHS are disjoint),
and that, for $t\in S$, $q_{S, \tau}(4^{-t}x_\infty)=q(4^{-t})$.

\medskip\noindent {\bf e)}
Choose a smooth positive function $ \tilde q\colon  \R \rightarrow \R $ such that:

\begin{itemize}
\item $ \tilde q (x+2) = \tilde q (x)$, $  x \in \R $, \smallskip
\item $ \tilde q (-x) = \tilde q (x) $, $x \in \R $, \smallskip
\item $ \tilde q (x) = q (5x) $, $x \in [0,1/5] $, \smallskip

\item $ \tilde q (x) < q (4x) $, $x \in [1/5,2/9] $, \smallskip

\item $ \tilde q (x) < q (1) $, $x \in [2/9,1] $.
\end{itemize}

\begin{figure}[h!]
\centering
\includegraphics[scale=0.77]{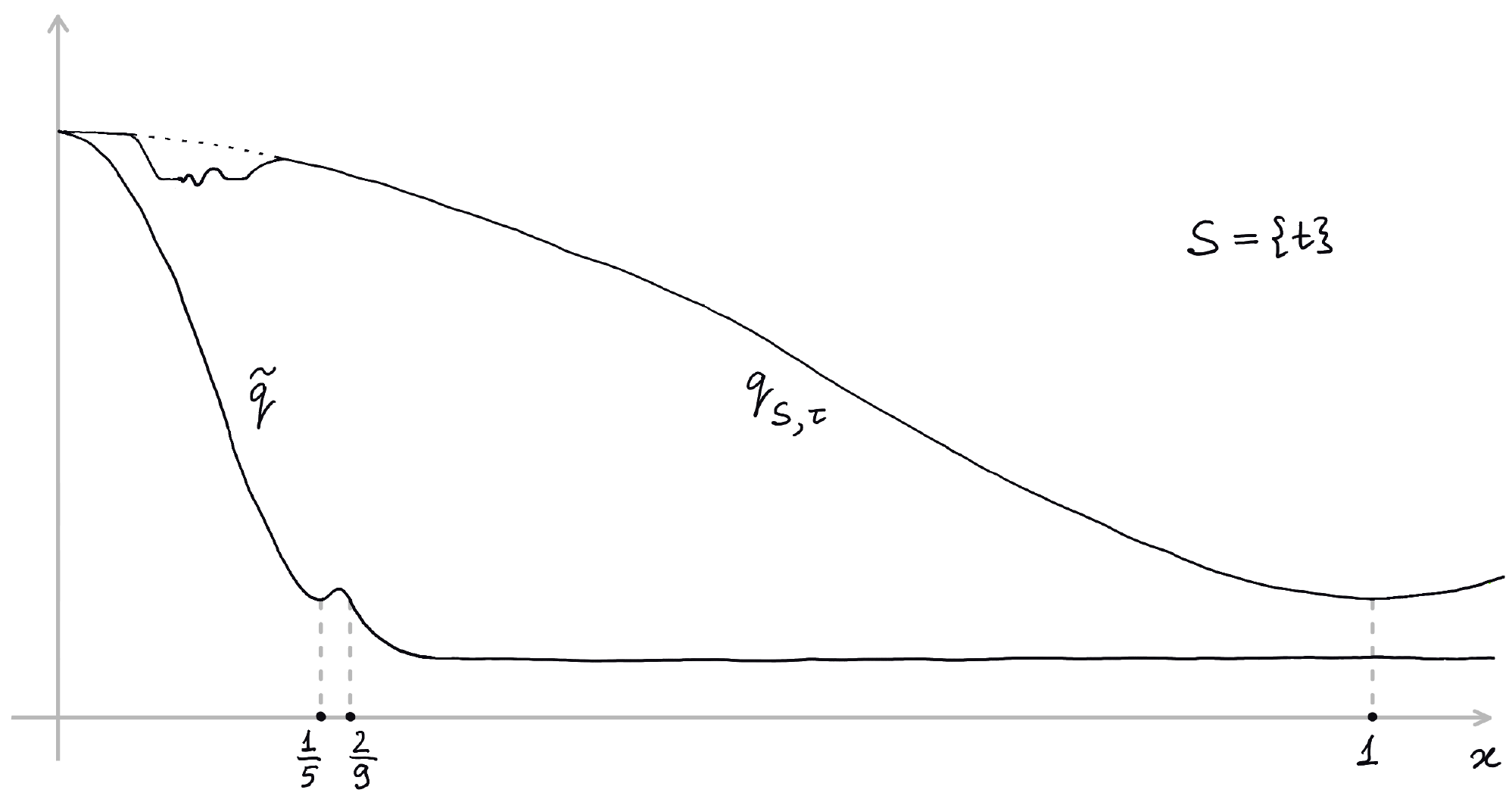}
\caption{\label{fig:graph-q-tilde}}
\end{figure}

We claim that {\em for any finite $1$-separated  set $ S \subset [1,\infty) $
and for any sufficiently small function $ \tau\colon S \rightarrow \R $,
we have $ \tilde q \leqslant q_{S,\tau} \leqslant q $}.
The upper bound is true for $ \tau $ small enough by the definition of the function $ q_{S,\tau} $.
To check the lower bound, it is enough to show that, for $ q_S := q_{S,0} $ (where
$ 0\colon S \rightarrow \R $ is the zero function),
we have $ q_S(x) > \tilde q (x) $ for any $ x \in (0,1] $.
Let us check this.

Note that, by the construction of $ q_S $, we have $ q_S(x) \geqslant q(4x) $ for $ x \in [0,1/4] $,
and $ q_S(x) \geqslant q(1) $ for $ x \in [1/4,1] $. Take any $ x \in (0,1] $. Then

\smallskip\noindent {\em \underline{Case 1:}}
$ \,\, x \in (0,1/5] $.
In that case we have $ q_S(x) - \tilde q(x) \geqslant q(4x) - \tilde q(x) = q(4x) - q(5x) > 0 $.

\smallskip\noindent {\em \underline{Case 2:}}
$ \,\, x \in [1/5,2/9] $. Then $ q_S(x) - \tilde q(x) \geqslant q(4x) - \tilde q(x) > 0 $.

\smallskip\noindent {\em \underline{Case 3:}}
$ \,\, x \in [2/9,1] $. Then $ q_S(x) - \tilde q(x) \geqslant q(1) - \tilde q(x) > 0 $.

\subsection{Two lemmas}

Here we will bring two lemmas on the solutions of the periodic ODE~\eqref{eq:maineq}.
The first lemma will guarantee that under a certain choice of $\lambda=\Lambda_{m, Q}$
the linear space of solutions to ODE~\eqref{eq:maineq} is two-dimensional. This will
ensure existence of a solution $F_m$ with vanishing derivative at a given point $x_*$,
which is needed for generation of oscillations of $F_m$ nearby $x_*$.

Let $ Q \colon \R \rightarrow (0, \infty) $
be a $ 2 $-periodic positive smooth even function. For every $ m \in \mathbb{N} $, consider the Dirichlet problem
\begin{equation} \label{eq:D-problem}
\begin{cases}
   u: [0,4] \rightarrow \R \\
   u''(x) + (\lambda Q(x) - m^2) u(x) = 0 \\
   u(0) = u(4) = 0 \\
  \end{cases}
\end{equation}

We denote by $ \Lambda_{Q,m} $ the first eigenvalue and by $ U_{Q,m} $ the first eigenfunction of this Dirichlet problem, extend $ U_{Q,m} $ to $ [0,8] $ by $ U_{Q,m}(x) = -U_{Q,m}(8-x) $ for $ x \in [4,8] $, and then extend $ U_{Q,m} $ to the whole $ \R $ by making it
$ 8 $-periodic. Then $ U_{Q,m}\colon \R \rightarrow \R $ is a smooth solution of
\begin{equation} \label{eq:N-problem}
\begin{cases}
   u\colon \R \rightarrow \R \text{ is 8-periodic} \\
   u''(x) + (\lambda Q(x) - m^2) u(x) = 0 \\
 \end{cases}
\end{equation}

\begin{lemma}\label{lemma:two-dim_space_of_solutions}
The space of solutions to $(\ref{eq:N-problem})$ with $ \lambda = \Lambda_{Q,m} $ is
two-dimensional.
\end{lemma}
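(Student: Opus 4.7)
The plan is to produce two linearly independent $8$-periodic solutions of the ODE with $\lambda=\Lambda_{Q,m}$; since the equation $u''(x)+(\lambda Q(x)-m^2)u(x)=0$ is a linear second-order ODE with smooth coefficients, its full solution space on $\R$ is two-dimensional, so the subspace of $8$-periodic solutions has dimension at most $2$. Exhibiting two independent periodic solutions will therefore force the dimension to be exactly $2$.

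The first candidate is $U_{Q,m}$ itself. Before using it I would first verify that the extension described after \eqref{eq:D-problem} is a bona fide smooth solution of the ODE on all of $\R$, not merely a continuous gluing. The key observation is that the ODE is invariant under $x\mapsto 8-x$: this uses both the evenness and the $2$-periodicity of $Q$, since $Q(8-x)=Q(-x)=Q(x)$. Consequently, if $U_{Q,m}$ solves the ODE on $[0,4]$, then $-U_{Q,m}(8-\cdot)$ solves it on $[4,8]$. At $x=4$ both functions vanish (Dirichlet condition) and their first derivatives agree automatically, so by the uniqueness theorem for linear ODEs the two pieces glue into a single smooth solution; the analogous matching at $x=0$ gives the smoothness across the periodic identification and, simultaneously, the $8$-periodicity.

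The second candidate is the translate $V(x):=U_{Q,m}(x+2)$. It is automatically $8$-periodic, and it solves the same ODE because the $2$-periodicity of $Q$ gives $Q(x+2)=Q(x)$. To see linear independence I would compare at $x=0$: on one hand $U_{Q,m}(0)=0$ by the Dirichlet condition, while on the other hand $V(0)=U_{Q,m}(2)\neq 0$ because $U_{Q,m}$ is the \emph{first} Dirichlet eigenfunction on $[0,4]$ and is therefore strictly positive on the open interval $(0,4)$. Thus $V$ cannot be a scalar multiple of $U_{Q,m}$, and together they span a two-dimensional space of $8$-periodic solutions, matching the upper bound.

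The only genuinely delicate point in the plan is the verification of smoothness of the extended $U_{Q,m}$ across the reflection point $x=4$ and the periodic identification at $x=0$. Everything else is essentially bookkeeping: the translate gives a second solution for free from the $2$-periodicity of $Q$, the linear independence comes from the well-known strict positivity of the first Dirichlet eigenfunction, and the upper bound on the dimension of the periodic solution space is immediate from general ODE theory.
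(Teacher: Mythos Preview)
Your proof is correct and follows essentially the same route as the paper: both use the translate $V(x)=U_{Q,m}(x+2)$ as the second independent $8$-periodic solution, with the $2$-periodicity of $Q$ making this a solution and the value at $x=0$ giving independence. The only minor difference is that the paper first proves $U_{Q,m}'(2)=0$ via the reflection $x\mapsto 4-x$ and simplicity of the first Dirichlet eigenvalue, then deduces $U_{Q,m}(2)\neq 0$ from ODE uniqueness, whereas you obtain $U_{Q,m}(2)\neq 0$ directly from strict positivity of the first Dirichlet eigenfunction.
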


\begin{proof}[Proof of Lemma~\ref{lemma:two-dim_space_of_solutions}]
Recall that $ Q $ is even and $2$-periodic. We have $ U_{Q,m}'(2) = 0 $,
since otherwise $ v(x) := U_{Q,m}(4-x) $ is another solution of $(\ref{eq:D-problem})$ with $ \lambda = \Lambda_{Q,m} $, which is linearly independent with $ U_{Q,m}|_{[0,4]} $, while the first eigenvalue of the Dirichlet problem $(\ref{eq:D-problem})$ must be simple. Therefore we get: $ U_{Q,m}(0) = 0 $, $ U_{Q,m}'(0) \neq 0 $, $ U_{Q,m}(2) \neq 0 $, $ U_{Q,m}'(2) = 0 $. Hence if we define $ V_{Q,m}= U_{Q,m}(x+2) $, then $ V_{Q,m}(x) $ is a solution of $(\ref{eq:N-problem})$ and is linearly independent with $ U_{Q,m} $. We conclude that the space of solutions of $(\ref{eq:N-problem})$ with $ \lambda = \Lambda_{Q,m} $ is
two-dimensional (obviously, the dimension cannot be greater than $ 2 $).
\end{proof}

In the second lemma we estimate the size of $\Lambda_{Q, m}$ chosen according to Lemma~\ref{lemma:two-dim_space_of_solutions}.

\begin{lemma} \label{l:varprin}
Let $ Q\colon \R \rightarrow (0, \infty) $ be a $ 2 $-periodic positive smooth function
with $ Q(0) > Q(x)$, $x \notin 2 \Z $. Then we have
\[ \Lambda_{Q,m} > \frac{m^2}{Q(0)} \]  for each $ m $,
and for every $ \epsilon > 0 $, we have
\[ \Lambda_{Q,m} < \frac{m^2}{Q(0) - \epsilon} \] when $ m $ is large enough.
\end{lemma}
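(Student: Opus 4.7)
The plan is to use the Rayleigh quotient characterization of $\Lambda_{Q,m}$. Multiplying the ODE in $(\ref{eq:D-problem})$ by $u$ and integrating by parts on $[0,4]$ identifies
$$\Lambda_{Q,m} = \inf \frac{\int_0^4 (u'(x))^2\,dx + m^2 \int_0^4 u(x)^2\,dx}{\int_0^4 Q(x) u(x)^2\,dx},$$
the infimum being over non-trivial smooth $u$ on $[0,4]$ with $u(0)=u(4)=0$, and attained at $u=U_{Q,m}$.

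For the lower bound, I substitute the minimizer $u=U_{Q,m}$. Using the hypothesis $Q(x)<Q(0)$ for $x\notin 2\Z$, together with the fact that $U_{Q,m}^2$ is positive on a set of full measure in $[0,4]$, I get the strict inequality $\int_0^4 Q\,U_{Q,m}^2\,dx<Q(0)\int_0^4 U_{Q,m}^2\,dx$. Dropping the non-negative term $\int (U_{Q,m}')^2\,dx$ from the numerator then yields
$$\Lambda_{Q,m}\geq \frac{m^2\int_0^4 U_{Q,m}^2\,dx}{\int_0^4 Q\,U_{Q,m}^2\,dx}>\frac{m^2}{Q(0)}.$$

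For the upper bound, I exploit the second maximum of $Q$ at the interior point $x=2$: by $2$-periodicity $Q(2)=Q(0)$, and smoothness of $Q$ at its maximum gives $Q(x)\geq Q(0)-C(x-2)^2$ in a neighborhood of $2$ for some $C>0$. For $\delta\in(0,1)$, I take the admissible test function $u_\delta(x)=\sin(\pi(x-2+\delta)/(2\delta))$ on $[2-\delta,2+\delta]$, extended by zero. A direct computation gives $\int (u_\delta')^2\,dx/\int u_\delta^2\,dx=\pi^2/(4\delta^2)$ and $\int Q\,u_\delta^2\,dx\geq (Q(0)-C\delta^2)\int u_\delta^2\,dx$, hence
$$\Lambda_{Q,m}\leq \frac{m^2+\pi^2/(4\delta^2)}{Q(0)-C\delta^2}.$$
Choosing $\delta=m^{-1/2}$ makes this bound equal to $(m^2/Q(0))\cdot(1+O(1/m))$, which is strictly less than $m^2/(Q(0)-\epsilon)$ for every fixed $\epsilon>0$ and all sufficiently large $m$.

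I do not anticipate a substantive obstacle; the content is a standard two-sided variational estimate. The only point requiring care is to notice that $Q$ attains its maximum value $Q(0)$ at the interior point $x=2$ as well as at the Dirichlet endpoints $0$ and $4$, which allows the concentrated Rayleigh test function for the upper bound to sit inside $(0,4)$ in a region where $Q$ is as close to $Q(0)$ as one likes, while still vanishing at the endpoints as required.
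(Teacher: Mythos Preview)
Your proof is correct and follows essentially the same route as the paper: both use the Rayleigh quotient for the Dirichlet problem~(\ref{eq:D-problem}), bound the denominator by $Q(0)\int u^2$ for the lower estimate, and plug in a test function concentrated near the interior maximum $x=2$ for the upper estimate. The only cosmetic differences are that the paper fixes $\delta$ independently of $m$ (using mere continuity of $Q$ at $2$ rather than a quadratic Taylor bound) and takes an unspecified smooth bump rather than your explicit sine, which is only piecewise smooth but of course still admissible in $H^1_0$.
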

\begin{proof}[Proof of Lemma \ref{l:varprin}]

The variational principle applied to the Dirichlet problem~\eqref{eq:D-problem}
(see, for instance,~\cite[Ch.~VI, \S1]{C-H}) says that
$$
\Lambda_{Q,m} = \min
\frac{\int_0^4 (u'(x))^2 + m^2(u(x))^2 \,
{\rm d}x}{\int_0^4 Q(x) (u(x))^2 \, {\rm d}x}\,,
$$
where the minimum is taken over all smooth functions $ u\colon [0,4] \rightarrow \R $ with $ u(0) = u(4) = 0 $. From here we clearly have
\[
\Lambda_{Q,m} > \frac{m^2}{\max Q} = \frac{m^2}{Q(0)}\,.
\]
Now, choose $ \delta > 0 $ small enough, and pick a smooth function
$ u\colon [0,4] \rightarrow \R $ with $ \supp(u) \subset (2-\delta,2+\delta) $
and $ u \not\equiv 0 $. Then for every $ m $ we have
$$
\Lambda_{Q,m} \leqslant
\frac{\int_0^4 (u'(x))^2 + m^2(u(x))^2 \, {\rm d }x}{\int_0^4 Q(x) (u(x))^2 \, {\rm d}x}
\leqslant
\frac{\int_0^4 (u'(x))^2 + m^2(u(x))^2 \, {\rm d}x}{(\min_{[2-\delta,2+\delta]} Q)\cdot
\int_0^4 (u(x))^2 \, {\rm d}x}
=
\frac{C+m^2}{\min_{[-\delta,\delta]} Q}\,,
$$
where
$$
C =  \frac{\int_0^4 (u'(x))^2 \, dx}{\int_0^4 (u(x))^2 \, dx} \,\, .
$$
From here we see that by choosing $ \delta > 0 $ so small that
$ \min_{[-\delta,\delta]} Q > Q(0) - \epsilon $,
we obtain
\[
\Lambda_{Q,m} < \frac{m^2}{Q(0) - \epsilon}
\]
for large enough $ m $.
\end{proof}

\subsection{The main argument}

We need to find an increasing sequence $(m_i)$ of positive
integers and a $1$-separated set $S=(t_i)$ so that for any fast decaying sequence
$\tau(t_i)$, letting $Q=q_{S, \tau}$, we get
$ \Lambda_{Q, m_i} Q(4^{-t_i}x_\infty) = m_i^2$, $i=1, 2, \, ...\,$,
where $\Lambda_{Q, m_i}$ is the first eigenvalue of the Dirichlet
problem $(\ref{eq:D-problem})$.
Note that $Q(4^{-t_i}x_\infty) = q_{S, \tau}(4^{-t_i}x_\infty) = q(4^{-t_i})$.

Using Lemma~\ref{l:varprin} and a continuous dependence of the first eigenvalue
$\Lambda_{Q, m}$ on the function $Q$, it is not difficult to find a
positive integer $m$ such that, for any
sufficiently small $\tau$, there exists $t$ (depending on $m$ and $\tau$ such that
$\Lambda_{q_{\{t\}, \tau}} q({4^{-t}}) = m^2$.
This will yield the existence of a metric on $\mathbb T^2$
with one Laplace eigenfunction with infinitely many critical points.
To construct a metric with a sequence of eigenfunctions having infinitely many critical
points we will use the following proposition.

\begin{prop} \label{p:main}
There exist sequences: $ m_1 < m_2 < \ldots $ of  positive integers,
$ \epsilon_1, \epsilon_2, \ldots  \in (0,\infty) $,
and $ M_0 = 0, M_1, M_2, \ldots \in [0,\infty) $, with $ M_{i+1} > M_i + 1 $
such that, for any $ k \in \mathbb{N} $ and any $ \tau_1,\tau_2, \ldots, \tau_k \in \R $
with $ |\tau_i| \leqslant \epsilon_i $, one can find $ t_1, \ldots, t_k $ with $ t_i \in [M_{i-1}+1,M_i] $, $i=1, 2, \ldots k$,  with the
following property:

For $ S = \{ t_1,t_2, \ldots, t_k \} $ and for the function
$ \tau\colon S \rightarrow \R $, $ \tau(t_i) = \tau_i $, $i=1,  2, \ldots , k$,
the function $ Q = q_{S,\tau} $ satisfies $ \Lambda_{Q,m_i} Q(4^{-t_i}x_\infty) - m_i^2 = 0 $ for $ i = 1, 2, \ldots, k $,
where $ \Lambda_{Q,m_i} $ is the first eigenvalue of the problem $(\ref{eq:D-problem})$ with $ m = m_i $.
\end{prop}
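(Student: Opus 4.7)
The plan is to apply Brouwer's fixed point theorem to a self-map of the compact box $B=\prod_{i=1}^k[M_{i-1}+1,M_i]$, obtained by solving the $i$-th target equation for $t_i$ with the remaining coordinates frozen. The sequences $(m_i)$, $(\epsilon_i)$, $(M_i)$ will be built by induction on $i$.

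Fix $i$ and freeze $(t_j)_{j\ne i}$ and $(\tau_j)$. By item~d), $Q(4^{-t_i}x_\infty)=q(4^{-t_i})$, so the $i$-th equation becomes
\[
q(4^{-t_i}) \;=\; \frac{m_i^2}{\Lambda_{Q,m_i}}.
\]
Choose $\epsilon_i$ small enough that item~e) guarantees $\tilde q\le Q\le q$. Since every cascade lies in $\bigcup_{t\ge 1}(4^{-t-1},4^{-t})$, $Q$ coincides with $q$ in a fixed neighborhood of $2\Z$. Hence the upper-bound half of Lemma~\ref{l:varprin}, applied with a test function supported near $x=2$, gives for any prescribed $\eta>0$ and all sufficiently large $m_i$ the estimate $m_i^2/\Lambda_{Q,m_i}>q(0)-\eta$, uniformly in the frozen data. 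In the opposite direction, monotonicity of the Rayleigh quotient in $Q$ combined with $Q\le q$ gives $\Lambda_{Q,m_i}\ge\Lambda_{q,m_i}$, and hence
\[
\frac{m_i^2}{\Lambda_{Q,m_i}} \;\le\; \frac{m_i^2}{\Lambda_{q,m_i}} \;=:\; q(0)-\delta_i,
\]
with $\delta_i>0$ by the strict lower bound of Lemma~\ref{l:varprin} applied to $q$ itself. Thus the target value lies in the $(t_j,\tau_j)$-independent window $(q(0)-\eta,\,q(0)-\delta_i)$. Because $t_i\mapsto q(4^{-t_i})$ is strictly increasing on $[M_{i-1}+1,M_i]$ and sweeps $[q(4^{-M_{i-1}-1}),\,q(4^{-M_i})]$, choosing first $\eta$ so small that $q(4^{-M_{i-1}-1})<q(0)-\eta$ (possible since $M_{i-1}$ is already fixed), then $m_i$ large enough that the upper estimate holds with this $\eta$, and finally $M_i$ large enough that $q(4^{-M_i})>q(0)-\delta_i/2$, places the target window strictly inside the sweep. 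The equation then has a unique solution $s_i = s_i(t_1,\ldots,t_{i-1},t_{i+1},\ldots,t_k;\tau)$ in the interior of $[M_{i-1}+1,M_i]$, depending continuously on the frozen data.

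With the sequences constructed, for each $k$ and admissible $\tau$ define
\[
T\colon B\to B,\qquad T(t_1,\ldots,t_k) \;=\; \bigl(s_1(t),\ldots,s_k(t)\bigr).
\]
Continuity of $T$ follows from continuity of $(t_1,\ldots,t_k)\mapsto q_{S,\tau}$ in $C^0$ (immediate from item~d), continuity of the first Dirichlet eigenvalue in the $C^0$-coefficient (classical via the Rayleigh quotient), and strict monotonicity of $q(4^{-\cdot})$. Brouwer's fixed point theorem applied to the continuous self-map $T$ of the compact convex box $B$ produces a fixed point $t^*=(t_1^*,\ldots,t_k^*)$, and the identity $s_i(t^*)=t_i^*$ for each $i$ is exactly the desired equality $\Lambda_{Q,m_i}\,q(4^{-t_i^*})=m_i^2$.

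The main obstacle is securing the uniform two-sided control of $m_i^2/\Lambda_{Q,m_i}$ independently of $(t_j,\tau_j)_{j\ne i}$. The sandwich $\tilde q\le Q\le q$ provides the upper side via monotonicity of $\Lambda$ in $Q$ and the strict gap $\delta_i>0$ from Lemma~\ref{l:varprin} applied to $q$; the fact that $Q$ agrees with $q$ near $x=2$ makes the test-function argument in Lemma~\ref{l:varprin} uniform in the frozen data, yielding the lower side. Together these produce the $(t_j,\tau_j)$-independent window that the sweep of $q(4^{-t_i})$ over $[M_{i-1}+1,M_i]$ brackets, so that the Brouwer map $T$ indeed lands in $B$.
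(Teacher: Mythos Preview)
Your overall strategy---inductive construction of $(m_i),(\epsilon_i),(M_i)$ followed by Brouwer's fixed point theorem on the box $B$---is exactly the paper's approach, and the reduction of the target equation to $q(4^{-t_i})=m_i^2/\Lambda_{Q,m_i}$ via Remark~\ref{rem:x-infty} is correct.

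There is, however, one genuine flaw in your justification of the lower bound $m_i^2/\Lambda_{Q,m_i}>q(0)-\eta$. You claim that ``$Q$ coincides with $q$ in a fixed neighborhood of $2\Z$'' because the cascades lie in $\bigcup_{t\ge 1}(4^{-t-1},4^{-t})$. This is false: that union accumulates at $0\in 2\Z$, and by $2$-periodicity the same happens near every point of $2\Z$. Since the proposition must hold for every $k$, the set $S=\{t_1,\ldots,t_k\}$ may contain $t_j$ as large as $M_k$, so the cascade intervals can sit arbitrarily close to $0$. There is therefore no $k$-independent neighborhood of $0$ on which $Q=q$, and the constant $C$ in the test-function estimate cannot be chosen independently of the frozen data in the way you describe. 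If you let the neighborhood depend on $k$, then so does ``$m_i$ large enough,'' which breaks the induction (you need to fix $m_i$ before $M_j$ for $j>i$ is known).

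The fix is immediate and is exactly what the paper does: use the \emph{other} half of your sandwich. From $Q\ge\tilde q$ and monotonicity of the first eigenvalue you get $\Lambda_{Q,m_i}\le\Lambda_{\tilde q,m_i}$, and Lemma~\ref{l:varprin} applied to the \emph{fixed} function $\tilde q$ (which satisfies $\tilde q(0)=q(0)$) yields $\Lambda_{\tilde q,m_i}<m_i^2/(q(0)-\eta)$ for $m_i$ large, uniformly in everything. This is precisely the role of $\tilde q$ in the construction; you already invoked it for the sandwich but then bypassed it at the crucial moment. With this correction your argument coincides with the paper's.
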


\begin{remark} \label{rem:x-infty}
As we have already mentioned,
by construction of the function $q_{S, \tau}$ in 2.3d, 
in the setting of the proposition we always have
$ Q(4^{-t_i}x_\infty) = q_{S, \tau}(4^{-t_i}x_\infty)= q(4^{-t_i}) $ for
$ i = 1, 2, \ldots, k $.
\end{remark}

\begin{proof}[Proof of Proposition \ref{p:main}]

First, we choose sequences $ (m_k) $, $ (\epsilon_k) $, $ (M_k) $ inductively as follows
(the values $t_1, \ldots , t_k$ will be chosen in the very end of the 
proof). 

\smallskip\noindent\underline{$k=1$}: By Lemma \ref{l:varprin}, there exists $ m_1 \in \mathbb{N} $ such that
\[
\frac{m_1^2}{q(0)} < \Lambda_{\tilde q,m_1} < \frac{m_1^2}{q(1/4)}\,.
\]
(Recall that $ \max \tilde q = \tilde q(0) = q(0) $). Then again by the lemma and by the monotonicity property for eigenvalues ($ q \geqslant \tilde q $), we have
\[
\frac{m_1^2}{q(0)} <
\Lambda_{q,m_1} \leqslant \Lambda_{\tilde q, m_1} < \frac{m_1^2}{q(1/4)}\,.
\]
 Let $ M_1 > 1 $ be such that
\[
 q(4^{-M_1}) = \frac{m_1^2}{\Lambda_{q,m_1}}\,.
\]
Now choose $ \epsilon_1 $ such that for any $ t_1 \in [1,M_1] $ and $ \tau_1 \in [-\epsilon_1,\epsilon_1] $ we have $ \tilde q \leqslant q_{S,\tau} \leqslant q $ on $ \mathbb{R} $, where $ S = \{ t_1 \} $, $ \tau : S \rightarrow \R $, $ \tau(t_1) = \tau_1 $.

\smallskip\noindent\underline{The inductive step}:
Assume that we have chosen
\[
m_1, \ldots, m_{k-1}, \quad \epsilon_1,\ldots, \epsilon_{k-1}, \quad M_1,\ldots,M_{k-1},
\]
and now choose $ m_k, \epsilon_k, M_k $. By Lemma \ref{l:varprin}, there exists $ m_k \in \mathbb{N} $ such that
\[
\frac{m_k^2}{q(0)} < \Lambda_{\tilde q,m_k} <
\frac{m_k^2}{q(4^{-1-M_{k-1}})}\,.
\]
Then, again by the lemma and by the monotonicity property for eigenvalues, we have
\[
\frac{m_k^2}{q(0)} < \Lambda_{q,m_k} \leqslant \Lambda_{\tilde q, m_k} < \frac{m_k^2}{q(4^{-1-M_{k-1}})}\,.
\]
 Now let $ M_k > M_{k-1} + 1 $ be such that
\[
q(4^{-M_k}) = \frac{m_k^2}{\Lambda_{q,m_k}}\,,
\]
and let $ \epsilon_k > 0 $ be such that for any $ t_k \in [M_{k-1} + 1, M_k] $ and any
$\tau_k \in [-\epsilon_k,\epsilon_k] $ we have $ \tilde q \leqslant q_{S,\tau} \leqslant q $ on $ \mathbb{R} $, where $ S = \{ t_k \} $ and $ \tau : S \rightarrow \R $, $ \tau(t_k) = \tau_k $.
This completes inductive construction of the sequences
$ (m_k) $, $ (\epsilon_k) $, $ (M_k) $. 

\medskip
Recall that $ M_0 = 0 $. Let $ k \in \mathbb{N} $, and let
\[
P := [M_0+1,M_1] \times [M_1+1,M_2] \times \dots \times [M_{k-1}+1,M_k]
\subset \mathbb{R}^k\,.
\]
Let $ \tau_1,\ldots ,\tau_k \in \mathbb{R} $ be such that $ |\tau_i| \leqslant \epsilon_i$,
$ i=1, 2, \ldots, k$.
Now define the map $ F : P \rightarrow \mathbb{R}^k $ as follows. For any $ (t_1,\ldots,t_k) \in P $, set $ S: = \{ t_1,\ldots,t_k \} $, let $ \tau\colon S \rightarrow \mathbb{R} $ be the function such that $ \tau(t_i) = \tau_i$, $i=1, 2, \ldots k$,
and denote $ Q = q_{S,\tau} $. We have $ \tilde q \leqslant Q \leqslant q $, hence for each $ 1 \leqslant i \leqslant k $,
$$ \frac{m_i^2}{q(4^{-M_i})} = \Lambda_{q,m_i} \leqslant \Lambda_{Q,m_i}  \leqslant \Lambda_{\tilde q,m_i} <  \frac{m_i^2}{q(4^{-1 - M_{i-1}})} \,,
$$
and therefore there exists a unique $ s_i \in (M_{i-1}+1,M_i] $ such that
\[
q(4^{-s_i}) = \frac{m_i^2}{\Lambda_{Q,m_i}}\,.
\]
Then we put $ F(t_1,\ldots,t_k) = (s_1,\ldots,s_k) $.

\medskip
Note first, that $ F $ is continuous (this follows from the continuous dependence of the
first Dirichlet eigenvalue $\Lambda_{Q, m}$ on the function $Q$). Secondly, for any $ (t_1,\ldots,t_k) \in P $ we have $ (s_1,\ldots,s_k) = F(t_1,\ldots,t_k) \in P $. That is, $ F(P) \subset P $. Therefore, by the Brower fixed point theorem, there exists a point $ (t_1,\ldots,t_k) \in P $ for which $ F(t_1,\ldots,t_k) = (t_1,\ldots,t_k) $, and hence by Remark \ref{rem:x-infty} we have
\[
Q(4^{-t_i}x_\infty) = q(4^{-t_i}) = \frac{m_i^2}{\Lambda_{Q,m_i}}
\]
for $ i = 1,2, \ldots, k$.
This finishes the proof of the proposition.
\end{proof}

\subsection{Completing the proof of Theorem~\ref{t:main}}

We fix the sequences $(m_i)$, $(M_i)$, and $(\epsilon_i)$ as in Proposition~\ref{p:main}.
For any $k\in\mathbb N$ and any $\tau_1, \ldots , \tau_k$ with $|\tau_i|\le \epsilon_i$,
Proposition~\ref{p:main} provides us with the values
$t_i(k)\in [M_{i-1}+1, M_i]$, $1\le i \le k$ such that, putting
$S(k)=\{t_1(k), \ldots , t_k(k)\}$, $Q_k=q_{S(k), \tau}$, we get
\[
\Lambda_{Q_k, m_i} Q_k(4^{-t_i}x_\infty) = m_i^2,
\quad i=1, \ldots , k.
\]
Now, we let $k\to\infty$. Using a diagonal argument, we assume that, for each
$i\in\mathbb N$, $t_i(k)\to t_i\in [M_{i-1}+1, M_i]$. Then we set $S=\{t_1, t_2, \ldots \}$.
Making the values $\tau_i$ sufficiently small, we note that
$ Q_k = q_{S(k), \tau} \to  q_{S, \tau} $ in the $C^\infty$-topology, and let
$Q=q_{S, \tau}$. At last, recalling that the first eigenvalue $\Lambda_{Q, m}$ of the Dirichlet problem~\eqref{eq:D-problem} continuously depends on $Q$,
we arrive at the following corollary:

\begin{corollary} \label{c:main}
There exists a sequence $ m_1,m_2, \ldots \in \mathbb{N} $, an infinite set
$ S = \{ t_1,t_2, \ldots \} \subset [1,\infty) $, with $ t_{i+1} \geqslant t_i + 1$, and a function $ \tau\colon S \rightarrow \R\setminus\{0\} $,
such that the function $ Q := q_{S,\tau} : \R \rightarrow \R $ is $ C^\infty $-smooth and
satisfies
\[
\Lambda_{Q,m_i} Q(4^{-t_i}x_\infty) - m_i^2 = 0, \quad i \in \mathbb{N}\,.
\]
\end{corollary}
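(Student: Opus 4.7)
The plan is a diagonal compactness argument on top of Proposition~\ref{p:main}, combined with the continuity of the first Dirichlet eigenvalue $\Lambda_{Q,m}$ in the coefficient $Q$. First I would fix the sequences $(m_i)$, $(\epsilon_i)$, and $(M_i)$ supplied by Proposition~\ref{p:main}, and then choose once and for all a sequence of nonzero numbers $\tau_i \in [-\epsilon_i,\epsilon_i]$ decaying rapidly enough in a sense made precise below.

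For every $k\in\mathbb N$, apply Proposition~\ref{p:main} to the prefix $\tau_1,\ldots,\tau_k$ to obtain values $t_i(k)\in[M_{i-1}+1,M_i]$, $1\leqslant i\leqslant k$. Setting $S(k)=\{t_1(k),\ldots,t_k(k)\}$ and $Q_k=q_{S(k),\tau}$, the proposition guarantees that
\[
\Lambda_{Q_k,m_i}\,Q_k(4^{-t_i(k)}x_\infty) - m_i^2 = 0, \qquad 1\leqslant i\leqslant k.
\]
Since each coordinate $t_i(k)$ ranges over the fixed compact interval $[M_{i-1}+1,M_i]$, a Bolzano--Weierstrass extraction done coordinate by coordinate, followed by a standard diagonal subsequence, produces a single subsequence (which I relabel) along which $t_i(k)\to t_i\in[M_{i-1}+1,M_i]$ for every $i$. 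Define $S=\{t_1,t_2,\ldots\}$ and extend $\tau$ to $S$ by $\tau(t_i)=\tau_i$; the $1$-separation $t_{i+1}\geqslant t_i+1$ is automatic from $M_i+1\leqslant M_{i+1}$, and $\tau$ avoids zero by construction.

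The core analytic point is to show that $Q_k\to Q:=q_{S,\tau}$ in the $C^\infty$ topology. By the recursive definition of $q_{S,\tau}$ in 2.3d, each cascade at level $t$ modifies the function only on the dyadic interval $(4^{-t-1},4^{-t})$, so the perturbations associated to distinct $t_i$'s have disjoint supports. The same flatness of $q$ at the origin exploited in 2.3b that made each individual $q_t$ close to $q$ in $C^\infty$ ensures that, provided the $|\tau_i|$ are chosen small enough a priori to defeat the growing derivatives of $H_{t_i}$ (which blow up as $t_i\to\infty$), the function $q_{S,\tau}$ is $C^\infty$-smooth and the partial sums $Q_k$ converge to it uniformly on compacta in every $C^j$-norm, uniformly over all admissible choices of $t_i(k)\in[M_{i-1}+1,M_i]$. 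This is the main obstacle, but it is essentially the same convergence issue already handled in 2.3b and 2.3d, so an appropriate a priori choice of $\tau_i$ suffices.

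Once the $C^\infty$-convergence $Q_k\to Q$ is in hand, I would pass to the limit in the identity above for each fixed $i$. The continuity of $\Lambda_{Q,m_i}$ in $Q$ yields $\Lambda_{Q_k,m_i}\to\Lambda_{Q,m_i}$, while Remark~\ref{rem:x-infty} gives $Q_k(4^{-t_i(k)}x_\infty)=q(4^{-t_i(k)})$, so the continuity of $q$ together with $t_i(k)\to t_i$ gives $Q_k(4^{-t_i(k)}x_\infty)\to q(4^{-t_i})=Q(4^{-t_i}x_\infty)$. Combining these yields $\Lambda_{Q,m_i}Q(4^{-t_i}x_\infty)-m_i^2=0$ for every $i\in\mathbb N$, completing the proof.
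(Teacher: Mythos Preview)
Your proposal is correct and follows essentially the same route as the paper: fix $(m_i),(\epsilon_i),(M_i)$ from Proposition~\ref{p:main}, apply the proposition for each $k$, run a diagonal compactness argument on the $t_i(k)$, and pass to the limit using the $C^\infty$-convergence $Q_k\to Q$ (secured by choosing the $\tau_i$ small enough a priori) together with the continuous dependence of $\Lambda_{Q,m}$ on $Q$. Your write-up is in fact a bit more explicit than the paper's, e.g.\ in invoking Remark~\ref{rem:x-infty} to handle the limit $Q_k(4^{-t_i(k)}x_\infty)\to Q(4^{-t_i}x_\infty)$.
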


Now we will readily finish off the proof of Theorem~\ref{t:main}.
Let $ Q $ be as in the corollary. Recall that the function $ h $ makes infinitely many rapidly decreasing oscillations near $ x_\infty $ (the function $ h $ and the point $ x_\infty $ were chosen in Section \ref{subs:construction}, c)). Hence for each $ i $, the function $ Q(x) $ makes infinitely many rapidly decreasing oscillations near $ 4^{-t_i} x_\infty $.

Fix $i$. 
Since the space of solutions of $(\ref{eq:N-problem})$ is $2$-dimensional
(by Lemma~\ref{lemma:two-dim_space_of_solutions}),
we can always find a solution of
\begin{equation} \label{eq:N-problem-subseq}
\begin{cases}
   u: \R \rightarrow \R \text{ is 8-periodic} \\
   u''(x) + (\Lambda_{Q,m_i} Q(x) - m_i^2) u(x) = 0, \\
 \end{cases}
\end{equation}
denoted by $ u_i(x) $, such that $ u_i( 4^{-t_i} x_\infty) > 0 $ and $ u_i'( 4^{-t_i} x_\infty) = 0 $. Applying our main observation (Section \ref{subs:main-observation}) with
$ K_{i}(x)
= \Lambda_{Q,m_i} Q(x) - m_i^2 $ and $ x_* = 4^{-t_i}x_\infty $, we conclude that $ u_i $ has infinitely many isolated critical points near $ 4^{-t_i} x_\infty $. This finishes the proof of Theorem \ref{t:main}. \hfill $\Box$

\section{Eigenfunctions with a level set having infinitely many connected components}

Here we outline changes in our construction needed to obtain a sequence of
eigenfunctions having a level set with infinitely many connected components.
For this, we replace condition (2) in Section~\ref{subs:main-observation} by a stronger
one:
\begin{itemize}
\item[($2'$)] $ K\colon \R \rightarrow \R $ is a smooth function such that
 $ (-1)^i K|_{(x_{i+1},x_{i})} < 0 $ for every $ i $, and
 \[
 \left| \int_{x_{i+1}}^{x_i} (x_i-x)K(x) \, dx \right|  \ge C\,
 \sum_{j=i+1}^{\infty} \left| \int_{x_{j+1}}^{x_j} K(x) \, dx \right|
 \]
  with some numerical constant $C>1$.
\end{itemize}
As above, we denote by $u\colon \mathbb R\to \mathbb R$ a solution to $u''+Ku=0$ with
$u(x_*)>0$, $u'(x_*)=0$. Choose $j_0$ so that $x_{j_0}-x_*<1$, and for every
$x, y\in [x_*, x_{j_0}]$, $1/C' \le u(x)/u(y) \le C'$ with $1<C'<C$.
Then it is not difficult to check that, for $j\ge j_0$, we have
\begin{itemize}
\item[(i)] ${\rm sign}( u'(x_j) )= (-1)^j$;
\item[(ii)] each interval $(x_{j+1}, x_j)$ contains exactly one critical point
$\xi_j$ of $u$; it is a local maximum of $u$ if $j$ is odd, and a local minimum
otherwise;
\item[(iii)] ${\rm sign}(u(x_j)-u(x_{j+1})) = (-1)^j$;
\item[(iv)] ${\rm sign}(u(\xi_j)-u(x_*)) = (-1)^{j-1}$.
\end{itemize}
We conclude that if $(\eta_j)_{j\ge j_0}$ are solutions to the equation
$u=u(x_*)$ on the interval $(x_*, x_{j_0}]$, then the sequences $(\eta_j)$ and $(\xi_j)$ interlace, i.e.,
$\eta_{j_0} > \xi_{j_0} > \eta_{j_0+1} > \xi_{j_0+1} > \eta_{j_0+2} > \ldots $.

Now, let the sequences $(m_i)$ and $(t_i)$, the function $Q$, and the values
$\Lambda_{Q, m_i}$ be the same as in Corollary~\ref{c:main}.
By $F_{m_i}$ we denote a solution of the $8$-periodic problem
$F'' + (\Lambda_{Q, m_i}Q-m_i^2) F = 0$ satisfying $F_{m_i}(4^{-t_i}x_\infty)>0$,
and $F_{m_i}'(4^{-t_i}x_\infty)=0$. Then the functions
$\phi_i(x, y) = F_{m_i}(x) \cos m_i y$ are Laplacian eigenfunctions on the torus
$\mathbb T^2$ equipped with the Riemannian metric
${\rm d}s^2 = Q(x)({\rm d}x^2 + {\rm d} y^2)$. Then, it is not difficult to see that,
for each of these eigenfunctions, the level sets $\phi_i = L_i$ with $L_i=F_{m_i}(4^{-t_i}x_\infty)$, have infinitely many connected components.

Indeed, fix $i$, take an odd $j\ge j_0(i)$, and consider the rectangle
$\mathcal R_j = \bigl\{\eta_{j+1}\le x \le \eta_j, |y|\le \pi/(2m_i) \bigr\}$.
Note that $\phi_i>L_i$ on the interval $\bigl\{ \eta_{j+1}<x<\eta_j, y=0 \bigr\}$,
and that $\phi_i=L_i$ at the end points of this interval. Furthermore, for each
$x\in [\eta_{j+1}, \eta_j]$, the function $y\mapsto \phi_i(x, y)$ decays to $0$
when $|y|$ increases from $0$ to $\pi/(2m_i)$. We conclude that, for each
odd $j\ge j_0(i)$, the set $\{\phi_i=L_i\} \cap \mathcal R_j$
is a topological circle. Clearly, these sets are disjoint for distinct odd
$j\ge j_0(i)$. We are done.

\subsection*{Acknowledgements}
The idea to construct the eigenfunction with many critical points explained in Section \ref{subs:main-observation} was suggested by Fedor Nazarov. We are grateful
to him for this suggestion and are sad that he refused to be a coauthor of this paper.
We thank the referee whose questions and comments helped us to improve the presentation.

L.B. was partially supported by ISF Grants 1380/13 and 2026/17, by the ERC Starting Grant 757585, and by Alon Fellowship. A.L. was partially supported
by ERC Advanced Grant 692616, ISF Grants 382/15 and 1380/13, during the time A.L. served as postdoctoral fellow at Tel Aviv University,
and part of this research was conducted during the time A.L. served as a Clay Research Fellow. M.S. was partially supported by ERC Advanced Grant 692616
and ISF Grant 382/15.

\vspace*{1cm}

{\noindent \bf Lev Buhovski,}$ $\\
School of Mathematical Sciences\\
Tel Aviv University \\
Ramat Aviv, Tel Aviv 69978\\
Israel\\
E-mail: levbuh@tauex.tau.ac.il \\

{\noindent \bf Alexander Logunov,}$ $\\
Department of Mathematics \\
Princeton University \\
Princeton, NJ, 08544 \\
USA \\
E-mail: log239@yandex.ru \\

{\noindent \bf Mikhail Sodin,}$ $\\
School of Mathematical Sciences\\
Tel Aviv University \\
Ramat Aviv, Tel Aviv 69978\\
Israel\\
E-mail: sodin@tauex.tau.ac.il

\end{document}